\theoremstyle{plain}
\newtheorem{theorem}{Theorem}[section]
\theoremstyle{remark}
\newtheorem{remark}[theorem]{\sc Remark}
\newtheorem{example}[theorem]{\sc Example}
\newtheorem*{acknowledgements}{\sc Acknowledgements}
\renewcommand{\baselinestretch}{1.08}
\newenvironment{reference}[1]{%
\renewcommand{\labelenumi}{[\arabic{enumi}]}
\begin{flushleft}\normalsize{\textsc{References}}\end{flushleft}%
\begin{enumerate}\setlength{\itemsep}{-5pt}\small
}{\end{enumerate}}
\renewcommand{\section}{%
\@startsection{section}{1}{\z@}%
{3.5ex \@plus -1ex \@minus -.2ex}%
{2.3ex \@plus.2ex}%
{\reset@font\normalsize\scshape}}
\renewcommand{\subsection}{%
\@startsection{subsection}{2}{\z@}%
{-3.5ex \@plus -1ex \@minus -.2ex}%
{-2.3ex \@plus.2ex}%
{\reset@font\normalsize\scshape}}
\DeclareSymbolFont{cyrletters}{OT2}{wncyr}{m}{n}
\DeclareMathSymbol{\Sha}{\mathalpha}{cyrletters}{"58}
\title{\vspace*{-22mm}
\large{\textbf{
On $p$-class groups of relative cyclic $p$-extensions
\\
}}
\footnotetext{2010 Mathematics Subject Classification: 11R29, 11R23.}
\footnotetext{Key words: 
Fukuda's theorem, 
class group, 
$p$-extension, 
restricted ramification. 
}
}
\author{
\textsc{\normalsize Yasushi Mizusawa}
\and
\textsc{\normalsize Kota Yamamoto}
}
\date{}
\begin{document}
{\renewcommand{\baselinestretch}{1.05} \maketitle}

\vspace*{-11mm}
\renewcommand{\abstractname}{}
{\renewcommand{\baselinestretch}{1.05}
\begin{abstract}{\small 
\noindent\textsc{Abstract.} 
We prove a general stability theorem for $p$-class groups of number fields along relative cyclic extensions of degree $p^2$, 
which is a generalization of a finite-extension version of Fukuda's theorem by Li, Ouyang, Xu and Zhang. 
As an application, we give an example of pseudo-null Iwasawa module over a certain $2$-adic Lie extension. 
}\end{abstract}}

\section{Introduction}

Let $p$ be a fixed prime number. 
For a finite extension $k$ of the rational number field $\mathbb Q$, 
we denote by $A(k)$ be the Sylow $p$-subgroup of the ideal class group $Cl(k)$ of $k$. 
As seen in the ambiguous class number formula (see \cite{Lem13,Yok67} etc.) and Iwasawa theory (see \cite{Gre76,Iwa56,Iwa59} etc.), 
$p$-divisibility and $p$-stability of class numbers in relative cyclic $p$-extensions 
are typical subjects of algebraic number theory. 
In particular, 
the following theorem provides many explicit examples of finite Iwasawa modules. 

\begin{theorem}[{Fukuda \cite{Fuk94}}]\label{thm:Fukuda}
Let $k_{\infty}/k$ be a $\mathbb Z_p$-extension (i.e., an infinite procyclic pro-$p$-extension) which is totally ramified at any ramified primes, 
and let $k_n/k$ be the subextension of degree $p^n$. 
If $|A(k_1)|=|A(k)|$, then $A(k_n) \simeq A(k)$ for all positive integer $n$. 
\end{theorem}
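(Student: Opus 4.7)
\emph{Proof plan.} The plan is to work in Iwasawa theory. Let $L_n$ be the $p$-Hilbert class field of $k_n$, set $L_\infty:=\bigcup_n L_n$, and consider the Iwasawa module $X:=\mathrm{Gal}(L_\infty/k_\infty)$, viewed as a finitely generated module over $\Lambda:=\mathbb Z_p[[T]]$, where $T=\gamma-1$ for a topological generator $\gamma$ of $\Gamma:=\mathrm{Gal}(k_\infty/k)$. By class field theory, $A(k_n)\simeq\mathrm{Gal}(L_n/k_n)$. The proof splits into two tasks: (a) relate the $A(k_n)$ to the layer quotients $X/\omega_n X$ with $\omega_n:=(1+T)^{p^n}-1$, and (b) exploit the hypothesis $|A(k_1)|=|A(k)|$ by a Nakayama-type argument.

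For (a), the totally ramified hypothesis is used as follows: for each prime of $k$ ramified in $k_\infty/k$, fix a prime of $L_\infty$ above it. Its inertia subgroup in $\mathrm{Gal}(L_\infty/k)$ surjects onto $\Gamma$ (by total ramification) and meets $X$ trivially (since $L_\infty/k_\infty$ is unramified), so it gives a splitting of $1\to X\to\mathrm{Gal}(L_\infty/k)\to\Gamma\to 1$. A standard computation then produces natural surjections $X/\omega_n X\twoheadrightarrow A(k_n)$ (an isomorphism when only one prime ramifies, and in general with a kernel generated by the ``differences'' of the chosen inertia lifts, controlled uniformly in $n$). Combining this with the surjectivity of the norm $A(k_1)\twoheadrightarrow A(k)$ (forced by ramification) and the hypothesis $|A(k_1)|=|A(k)|$, a comparison of the kernels yields $|X/\omega_1 X|=|X/TX|$.

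For (b), write $\omega_1=Tf(T)$ with $f(T):=\sum_{j=1}^{p}\binom{p}{j}T^{j-1}$, so $\omega_1 X\subseteq TX$, and the equal orders upgrade this inclusion to the equality $\omega_1 X=TX$, i.e., $f(T)\cdot(TX)=TX$. Since $f(0)=p$, the element $f(T)$ lies in the maximal ideal $\mathfrak m:=(p,T)$ of $\Lambda$, so $TX=\mathfrak m\cdot TX$, and topological Nakayama applied to the finitely generated $\Lambda$-module $TX$ gives $TX=0$. Consequently $\omega_n X=0$ for every $n$, the surjections from step (a) become $X\twoheadrightarrow A(k_n)$, and iterating the same kernel comparison together with the norm surjectivity $A(k_{n+1})\twoheadrightarrow A(k_n)$ at each layer forces all these surjections to be isomorphisms, proving $A(k_n)\simeq A(k)$ for every $n\geq 1$.

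The main obstacle is step (a): when several primes are totally ramified, the kernels of $X/\omega_n X\twoheadrightarrow A(k_n)$ must be tracked precisely enough that the numerical hypothesis at $n=1$ translates into the module-theoretic equality $\omega_1 X=TX$. Once that identification is in place, the Nakayama annihilation of $TX$ is the short conceptual core, and the passage from order equalities to group isomorphisms is formal bookkeeping with norm-surjective transition maps.
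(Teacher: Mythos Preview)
Your proposal follows Fukuda's original Iwasawa-theoretic argument, which the paper cites but does not itself reproduce. The paper instead derives Theorem~\ref{thm:Fukuda} by a different route: it proves Theorem~\ref{thm:main} (hence Theorem~\ref{thm:LOXZ}, the degree-$p^2$ statement) by a purely Galois-theoretic argument---no $\Lambda$-modules at all, only the topology of pro-$p$ groups and the subgroup structure of an abelian group of type $[p^2,p]$---and then recovers Fukuda's theorem by applying the degree-$p^2$ case recursively to $k_{n+2}/k_n$. The remark after the proof makes the analogy explicit: the implication ``$[G,G]=[[G,G],G]\Rightarrow[G,G]=\{1\}$'' in the paper's argument plays the role that Nakayama's lemma plays in yours. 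What the paper's approach buys is a proof that works one finite layer at a time and generalizes to the $(S,T)$-ramification setting of Theorem~\ref{thm:main}; what your approach buys is directness and access to the full $\Lambda$-module structure.

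One technical caveat in your sketch: in the several-primes case the object to which Nakayama should be applied is not $TX$ but the submodule $Y_0=TX+\sum_i\mathbb Z_p a_i$ generated by $TX$ together with the ``differences of inertia lifts'' you allude to. One has $A(k_n)\simeq X/\nu_{n,0}Y_0$ with $\nu_{n,0}=\omega_n/T$, so the hypothesis $|A(k_1)|=|A(k)|$ gives $\nu_{1,0}Y_0=Y_0$ directly, and since $\nu_{1,0}\in(p,T)$ Nakayama yields $Y_0=0$, whence $A(k_n)\simeq X$ for all $n$ in one stroke. Your intermediate target $|X/\omega_1X|=|X/TX|$ is not the right equality to aim for: the quotients $X/\omega_nX$ need not be finite a priori, so order comparisons there are not available, whereas $X/Y_0\simeq A(k)$ and $X/\nu_{1,0}Y_0\simeq A(k_1)$ are finite by construction.
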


The proof of Theorem \ref{thm:Fukuda} is based on the theory of modules over Iwasawa algebra. 
By the same proof, 
we also obtain a $p$-rank version (\cite[Theorem 1 (2)]{Fuk94}), a version for $p$-ideal class groups (\cite[Proposition 3]{Miz10}) and a $p^e$-rank version for ray class groups (\cite[Theorem 4.3]{Miz18}) etc. 
Moreover, 
by a parallel proof based on Galois module theory, 
a version of Theorem \ref{thm:Fukuda} for cyclic extension of degree $p^2$ 
has been obtained as follows. 

\begin{theorem}[{Li, Ouyang, Xu and Zhang \cite{LOXZ}}]\label{thm:LOXZ}
Let $k''/k$ be a cyclic extension of degree $p^2$ 
with the subextension $k'/k$ of degree $p$. 
Assume that $k''/k$ is totally ramified at any ramified primes, and not unramified. 
Then, for any positive integer $e$, 
$A(k'')/p^eA(k'') \simeq A(k')/p^eA(k') \simeq A(k)/p^eA(k)$ 
if $|A(k')/p^eA(k')|=|A(k)/p^eA(k)|$. 
\end{theorem}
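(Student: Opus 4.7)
The plan is to adapt Fukuda's original $\mathbb Z_p$-extension argument to the finite cyclic extension $k''/k$, by direct module-theoretic manipulation of $M := A(k'')/p^e A(k'')$. Let $G = \mathrm{Gal}(k''/k) = \langle\sigma\rangle$ and $H = \langle\sigma^p\rangle = \mathrm{Gal}(k''/k')$, and view $M$ as a module over the local ring $R = (\mathbb Z/p^e\mathbb Z)[G]$. Write $T = \sigma - 1$ and $\nu = 1 + \sigma + \cdots + \sigma^{p-1}$, so that $\sigma^p - 1 = T\nu$ and $\nu \equiv p \pmod{TR}$. The total ramification hypothesis (combined with the existence of at least one ramified prime) forces the three norm maps $N_{k''/k}$, $N_{k''/k'}$, $N_{k'/k}$ to be surjective on the $p$-Sylow class groups, hence surjective modulo $p^e$; combined with the size hypothesis, this already makes $\overline N_{k'/k}:A(k')/p^eA(k') \to A(k)/p^eA(k)$ an isomorphism.

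The key intermediate step is the pair of coinvariant identifications
\[
M/TM \;\simeq\; A(k)/p^e A(k), \qquad M/(\sigma^p-1)M \;\simeq\; A(k')/p^e A(k').
\]
The surjections follow for free from the norm factorizations ($N_{k''/k}$ kills $TM$, while $N_{k''/k'}$ kills $(\sigma^p-1)M$), so the content is the injectivity, i.e.\ the vanishing of the relevant Tate cohomology groups $\hat H^{-1}(G,M)$ and $\hat H^{-1}(H,M)$. This is where the total ramification hypothesis enters in an essential way, presumably via the ambiguous class number formula combined with an $S$-unit index computation to control the capitulation kernel. This coinvariant identification is the finite-level analogue of the Iwasawa-theoretic equality $X/TX \simeq A(k)$ (which is essentially automatic in the pro-$p$ setting), and I expect it to be the main technical obstacle in the proof.

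Granting these identifications, the conclusion follows by a Nakayama argument. The hypothesis $|A(k')/p^eA(k')| = |A(k)/p^eA(k)|$ translates to $|M/(\sigma^p-1)M| = |M/TM|$; since $(\sigma^p-1)M = T\nu M \subseteq TM$, the induced surjection is forced to be an isomorphism, so $TM = T\nu M$. Writing $\nu = p + Tg$ with $g \in R$ (possible because $\nu - p = T\cdot\sum_{i=1}^{p-1}(1+\sigma+\cdots+\sigma^{i-1})$) and setting $Y := TM$, one gets $Y = \nu Y \subseteq pY + TY$, whence $Y/TY = p\cdot(Y/TY)$; as $Y/TY$ is a finitely generated $\mathbb Z/p^e$-module and $p$ lies in its Jacobson radical, Nakayama yields $Y = TY$, so $Y = T^kY$ for every $k\ge 1$. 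Finally, $T^{p^2} \equiv \sigma^{p^2}-1 \equiv 0 \pmod p$ in $R$ iterates to $T^{p^2 e} = 0$ in $R$, so $Y = 0$. Thus $\sigma$ acts trivially on $M$, giving $M = M/TM \simeq A(k)/p^eA(k)$ and, by the other coinvariant identification, $M \simeq A(k')/p^eA(k')$, which is the theorem.
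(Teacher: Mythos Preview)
Your overall strategy mirrors the LOXZ/Fukuda module-theoretic argument, but the ``key intermediate step'' you isolate --- the coinvariant identifications $M/TM \simeq A(k)/p^eA(k)$ and $M/(\sigma^p-1)M \simeq A(k')/p^eA(k')$ --- is not merely a technical obstacle you have left open: it is \emph{false} as stated. When $k''/k$ has more than one totally ramified prime, genus theory produces classes in $M_G$ beyond those coming from $A(k)$; for instance with $k=\mathbb Q$ one gets $A(k)/p^e=0$ while $M_G$ can be nontrivial. Your parenthetical that $X/TX\simeq A(k_0)$ is ``essentially automatic'' in the $\mathbb Z_p$-setting is likewise inaccurate: with several ramified primes one has only $X/Y_0\simeq A(k_0)$ for a submodule $Y_0\supseteq TX$ built from the inertia generators (as in Washington, \S13). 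The formulation via $\hat H^{-1}(G,M)$ does not repair this, since the arithmetic norm $N_{k''/k}\colon M\to A(k)/p^e$ and the group-ring norm $N_G\colon M\to M$ differ by the capitulation map $A(k)\to A(k'')$, which need not be injective. The salvageable version replaces $TM$ by the Galois-theoretic kernel $Y_0=\mathrm{Gal}(L''/Lk'')$ (in the paper's notation) and establishes the relation $Y_1=\nu Y_0$ between the two kernels; that relation, not the coinvariant computation, is the real content of the LOXZ argument, and your Nakayama step then goes through verbatim with $Y_0$ in place of $TM$.

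The paper, however, does \emph{not} follow this module-theoretic route at all. It deduces the statement from its Theorem~2.1 (specialised to $S=T=\emptyset$), whose proof is a direct field-theoretic contradiction: assuming $H=\mathrm{Gal}(L''/LK'')\neq 1$, one finds a normal subgroup $N$ with $H/N$ central of order $p$, so that $(L'')^N/L$ is abelian of type $[p^2,p]$; analysing the inertia subgroups of the totally ramified primes inside this $[p^2,p]$-group produces an intermediate field $M\supsetneq L'$ which is nevertheless forced to lie in $L_{S,e}^T(K')=L'$, a contradiction. The paper itself remarks that the implication ``$[G,G]=[[G,G],G]\Rightarrow [G,G]=1$'' in this argument plays the role that Nakayama's lemma plays in the Fukuda/LOXZ approach. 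So your Nakayama paragraph is morally the right endgame, but the paper's route sidesteps entirely the coinvariant identification you could not supply.
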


On the other hand, 
by arguments similar to the proof of Iwasawa's $p$-divisibility theorem (\cite{Iwa56}, see also \cite[Theorem 10.4]{Was}), 
Theorem \ref{thm:LOXZ} for $k$ with cyclic $A(k)$ is also obtained 
under more general ramification condition (see \cite[Theorem 2.1 and Corollary 2.3]{Kota20}, also \cite[Proposition 1]{C4MY}). 
In this paper, combining the ideas of these results, 
we give a generalization of Theorem \ref{thm:LOXZ} in more general situation 
where the number fields are not necessarily finite extensions of $\mathbb Q$. 
Since our proof is not based on Galois module theory, 
another proof of Theorem \ref{thm:LOXZ} is also obtained.

\section{Main theorem}

We denote by $\mathbb N$ the set of positive integers. 
Let $K$ be an algebraic extension of $\mathbb Q$, 
and let $S$, $T$ be sets of primes of a subextension of $K/\mathbb Q$ such that $S \cap T=\emptyset$. 
We denote by $L_{S,\infty}^T(K)$ the maximal abelian pro-$p$-extension of $K$ 
which is unramified outside $S$ and totally decomposed at any primes lying over $T$. 
For each $e \in \mathbb N$, let $L_{S,e}^T(K)/K$ be the maximal subextension of $L_{S,\infty}^T(K)/K$ such that the exponent of the Galois group is at most $p^e$. 
Put $A_{S,e}^T(K)=\mathrm{Gal}(L_{S,e}^T(K)/K)$ for $e \in \mathbb N \cup \{\infty\}$. 
Then $L_{S,\infty}^T(K)=\bigcup_{e \in \mathbb N} L_{S,e}^T(K)$, and $A_{S,e}^T(K) \simeq A_{S,\infty}^T(K)/p^eA_{S,\infty}^T(K)$ for each $e \in \mathbb N$. 

If $K/\mathbb Q$ is a finite extension, 
$A_{\emptyset,e}^{\emptyset}(K) \simeq A_{\emptyset,\infty}^{\emptyset}(K) \simeq A(K)$ for all sufficiently large $e \in \mathbb N$. 
Moreover if $S$ is a finite set, 
$A_{S,\infty}^T(K)$ is finitely generated as an abelian pro-$p$ group. 
Then, for each $e \in \mathbb N$, 
$A_{S,e}^T(K)$ is isomorphic to the quotient $p$-group of the ray class group of $K$ modulo sufficiently high power of $\prod_{v \in S}v$ factored by the minimal subgroup containing all $p^e$th power elements and all classes of primes lying over $T$. 

The main result of this paper is the following theorem. 

\begin{theorem}\label{thm:main}
Let $K$ be an algebraic extension of $\mathbb Q$, and 
let $\varSigma$, $S$, $T$ be sets of primes of $K$ such that $S \cap T =\emptyset$. 
Suppose that $K''/K$ is a cyclic extension of degree $p^2$ unramified outside $\varSigma$ with the unique subextension $K'/K$ of degree $p$. 
Assume that the following conditions are satisfied: 
\begin{enumerate}
\renewcommand{\theenumi}{{\rm (\arabic{enumi})}}
\renewcommand{\labelenumi}{{\rm (\arabic{enumi})}}

\item\label{asm:S-S+T}
$(\varSigma \setminus S) \cup T \neq \emptyset$. 

\item\label{asm:totram}
$K''/K$ is totally ramified at any primes lying over $\varSigma \setminus (S \cup T)$. 


\item\label{asm:nonsp}
No prime lying over $T$ splits in $K'/K$. 

\end{enumerate}
Then the following statements hold true for each $e \in \mathbb N \cup \{\infty\}$: 
\begin{enumerate}
\item[$\cdot$]
The restriction mapping $\rho_e : A_{S,e}^T(K') \rightarrow A_{S,e}^T(K)$ is surjective. 
\item[$\cdot$]
If $\rho_e$ is an isomorphism, 
$A_{S,e}^T(K'') \simeq A_{S,e}^T(K') \simeq A_{S,e}^T(K)$ via the restriction mappings. 
Then moreover $A_{S,e}^{\varSigma \cap T}(K'') \simeq A_{S,e}^T(K'')$ 
if $\varSigma \setminus S \neq \emptyset$. 
\end{enumerate}
\end{theorem}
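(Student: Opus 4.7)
Throughout, write $L = L_{S,e}^T(K)$, $L' = L_{S,e}^T(K')$, $L'' = L_{S,e}^T(K'')$, and let $\rho_e' : A_{S,e}^T(K'') \to A_{S,e}^T(K')$ denote the analogous restriction map. The plan is to reduce each assertion to a comparison of field compositums, in the spirit of the Iwasawa $p$-divisibility theorem alluded to in the introduction. First, to prove that $\rho_e$ is surjective, I would show that $L \cap K' = K$: assuming otherwise, $K' \subseteq L$, so $K'/K$ is itself abelian pro-$p$, of exponent $\leq p^e$, unramified outside $S$, and totally decomposed at $T$. By \ref{asm:S-S+T} there is $v \in (\varSigma \setminus S) \cup T$; if $v \in T$, total decomposition in $K'/K$ contradicts \ref{asm:nonsp}, while if $v \in \varSigma \setminus (S \cup T)$ then \ref{asm:totram} makes $v$ totally ramified in $K''/K$, hence in $K'/K$, contradicting unramifiedness at $v \notin S$. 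Once $L \cap K' = K$, the compositum $L \cdot K'$ lies inside $L'$ with $\mathrm{Gal}(L \cdot K'/K') \simeq A_{S,e}^T(K)$, so $\rho_e$ factors as a surjection onto this quotient. The same two-case argument with $K''$ in place of $K'$ shows $K'' \not\subseteq L$, and combined with $K' \not\subseteq L$ we conclude $L \cap K'' = K$, so that the composite $\rho_e \circ \rho_e' : A_{S,e}^T(K'') \to A_{S,e}^T(K)$ is surjective as well.

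Now assume $\rho_e$ is an isomorphism. Then $L' = L \cdot K'$, so $L' \cdot K'' = L \cdot K''$, and the surjectivity of $\rho_e'$ is immediate from the surjectivity of the composite above. The remaining content is the reverse containment $L'' \subseteq L \cdot K''$. My plan is to exploit that $L''/K$ is Galois (the conditions defining $L''$ are formulated in terms of $\mathrm{Gal}(K''/K)$-stable sets of primes of $K$), and to study the maximal abelian subextension $M/K$ of $L''/K$, which contains $L \cdot K''$. Using \ref{asm:totram} to trace ramification at primes in $\varSigma \setminus (S \cup T)$ back through the cyclic $K''/K$-piece, and \ref{asm:nonsp} to trace decomposition at primes over $T$, I would argue that $M/K$ satisfies the conditions defining $L/K$ once the $K''/K$-factor is quotiented out, forcing $M \subseteq L \cdot K''$. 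A Nakayama-type argument applied to the $\mathrm{Gal}(K''/K)$-module $\mathrm{Gal}(L''/L \cdot K'')$, whose $\mathrm{Gal}(K''/K)$-coinvariants correspond precisely to $\mathrm{Gal}(M/L \cdot K'')$, then collapses the remaining piece. This last collapse is the main obstacle: unlike the surjectivity argument, it genuinely requires the isomorphism hypothesis to rule out abelian extensions of $K''$ beyond $L \cdot K''$, and I expect the most delicate bookkeeping to occur at primes in $\varSigma \cap T$, where neither total ramification nor non-splitting is directly available.

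For the final assertion, suppose $\varSigma \setminus S \neq \emptyset$. Conditions \ref{asm:S-S+T}--\ref{asm:nonsp} persist when $T$ is replaced by $\varSigma \cap T$: \ref{asm:S-S+T} uses $\varSigma \setminus S \neq \emptyset$; \ref{asm:totram} is unchanged since $\varSigma \setminus (S \cup (\varSigma \cap T)) = \varSigma \setminus (S \cup T)$; and \ref{asm:nonsp} follows from $\varSigma \cap T \subseteq T$. My plan is to re-run the iso argument of the preceding paragraph with $\varSigma \cap T$ in place of $T$, obtaining $L_{S,e}^{\varSigma \cap T}(K'') = L_{S,e}^{\varSigma \cap T}(K) \cdot K''$, and then compare with $L'' = L \cdot K''$. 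The needed extra input is the equality $L_{S,e}^{\varSigma \cap T}(K) = L$, propagating the iso hypothesis from $T$ to $\varSigma \cap T$; I plan to establish it by observing that any prime of $K$ in $T \setminus \varSigma$ is unramified in $K''/K$ and, by \ref{asm:nonsp}, inert in $K'/K$, so that, together with the existence of a ramified prime in $\varSigma \setminus S$, a global reciprocity argument forces any abelian pro-$p$ extension of $K$ which is $S$-unramified and $(\varSigma \cap T)$-decomposed to be automatically $(T \setminus \varSigma)$-decomposed. This propagation is the extra subtlety of the $\varSigma \cap T$ conclusion.
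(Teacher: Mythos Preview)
Your surjectivity argument is correct and matches the paper. The gap is in the ``collapse'' step. The identification you assert---that the $\mathrm{Gal}(K''/K)$-coinvariants of $H=\mathrm{Gal}(L''/LK'')$ equal $\mathrm{Gal}(M/LK'')$ for $M$ the maximal abelian subextension of $L''/K$---is false as stated. Writing $G=\mathrm{Gal}(L''/K)$, the coinvariants are $H/[H,G]$, whereas $\mathrm{Gal}(M/LK'')=H/[G,G]$; in general $[H,G]\subsetneq[G,G]$ (take any nontrivial central extension of the abelian group $G/H$ by $H$). So even if you succeeded in proving $M\subseteq LK''$, i.e.\ $H=[G,G]$, Nakayama would still require $H=[H,G]+pH$, which you have not established. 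Moreover, the step ``$M/K$ satisfies the conditions defining $L/K$ once the $K''/K$-factor is quotiented out'' hides the real difficulty: distinct primes in $(\varSigma\setminus S)\cup T$ may have distinct inertia or decomposition subgroups in $\mathrm{Gal}(M/K)$, all mapping isomorphically onto $\mathrm{Gal}(K''/K)$, so there is no single complement to quotient by. Finally, your route to the $\varSigma\cap T$ statement via $L_{S,e}^{\varSigma\cap T}(K)=L$ fails: this equality is false in general (take $K$ with nontrivial cyclic $A(K)$, $S=\emptyset$, and $T=\{v\}$ for a Chebotarev prime $v\notin\varSigma$ whose Frobenius generates $A(K)$), and the behaviour of $T\setminus\varSigma$ in $K''/K$ says nothing about its behaviour in abelian extensions of $K$ unrelated to $K''$.

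The paper avoids all of this by arguing by contradiction at the level of a single $\mathbb Z/p$-layer. Assuming $H\neq 1$, one uses the pro-$p$ property ($[G,G]\neq[[G,G],G]$ unless $[G,G]=1$) to find a $G$-normal index-$p$ subgroup $N\subset H$ with $[H,G]\subset N$; then $(L'')^N/L$ is abelian of order $p^3$, and the ramification/splitting hypotheses force it to be of type $[p^2,p]$ rather than cyclic. In such a group every cyclic subgroup of order $p^2$ contains the unique subgroup of order $p$, so all the inertia/decomposition groups over $(\varSigma\setminus S)\cup T$ share a common index-$p$ fixed field $M$ with $L'\subsetneq M$; one then checks $M\subset L_{S,e}^T(K')=L'$, a contradiction. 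The $\varSigma\cap T$ assertion is obtained for free by running this argument with $L''=L_{S,e}^{\varSigma\cap T}(K'')$ from the outset (when $\varSigma\setminus S\neq\emptyset$) and verifying along the way that $(L'')^N\subset L_{S,e}^T(K'')$, rather than by comparing base fields.
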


\begin{remark}
Suppose that $K/\mathbb Q$ is a finite extension. 
If $S=T=\emptyset$, 
Theorem \ref{thm:main} is exactly Theorem \ref{thm:LOXZ}. 
Theorem \ref{thm:Fukuda} is obtained by the recursive use of Theorem \ref{thm:LOXZ} for $k_{n+2}/k_n$. 
If $S=\emptyset$, $T \subset \varSigma$, and $A_{\emptyset,\infty}^T(K)$ is trivial, then 
Theorem \ref{thm:main} is just \cite[Theorem 2.1]{Kota20} and \cite[Proposition 1]{C4MY}. 
\end{remark}

\section{Proof of Theorem \ref{thm:main}}

By \ref{asm:totram} and \ref{asm:nonsp}, $(\varSigma \setminus S) \cup T$ contains no archimedean primes. 
Put $L=L_{S,e}^T(K)$, $L'=L_{S,e}^T(K')$ and 
\[
L''= \left\{\begin{array}{ll}
L_{S,e}^{\varSigma \cap T}(K'') & \hbox{if $\varSigma \setminus S \neq \emptyset$,} \\
L_{S,e}^T(K'') & \hbox{if $\varSigma \setminus S=\emptyset$.} 
\end{array}\right.
\]
Then $LK' \subset L'$ and $LK'' \subset L'K'' \subset L_{S,e}^T(K'') \subset L''$. 
Moreover, $K'' \cap L =K' \cap L =K$ 
by \ref{asm:S-S+T} and \ref{asm:totram} if $T=\emptyset$, 
and by \ref{asm:nonsp} if $T \neq \emptyset$. 
Hence $\rho_e$ is surjective and $\mathrm{Ker}\,\rho_e=\mathrm{Gal}(L'/LK')$. 
Assume that $\rho_e$ is an isomorphism from now on. 
Then $L'=LK'$. 
By the maximality of $L''$, $L''/K$ is a Galois extension, 
which is unramified outside $\varSigma \cup S$. 
Put $G=\mathrm{Gal}(L''/K)$ and $H=\mathrm{Gal}(L''/LK'')$. 
Then $[G,G] \subset H$, where the bracket denotes the closed commutator subgroup. 

Suppose that $H \neq \{1\}$. 
Since $[G,G] \neq [[G,G],G]$ if $[G,G] \neq \{1\}$, 
we have $H \neq [[G,G],G]$. 
Then $H$ has a normal subgroup $N$ of index $|H/N|=p$ such that 
\begin{enumerate}
\item[$\cdot$]
$[G,G] \subset N$ if $H \neq [G,G]$, 
\item[$\cdot$]
$[[G,G],G] \subset N$ if $H=[G,G]$. 
\end{enumerate}
In either case, $[H,G] \subset N$, 
and $N/[H,G]$ is a normal subgroup of $G/[H,G]$. 
Then $N$ is a normal subgroup of $G$, 
and $H/N=\mathrm{Gal}((L'')^N/LK'')$ is contained in the center of $G/N=\mathrm{Gal}((L'')^N/K)$. 
In particular, 
$\mathrm{Gal}((L'')^N/LK'')$ is also contained in the center of $\mathrm{Gal}((L'')^N/L)$. 
Since $\mathrm{Gal}(LK''/L)$ is cyclic, 
$(L'')^N/L$ is an abelian extension of degree $p^3$. 

By \ref{asm:S-S+T}, 
at least one of the following conditions is satisfied: 
\begin{enumerate}
\renewcommand{\theenumi}{{\rm (\roman{enumi})}}
\renewcommand{\labelenumi}{{\rm (\roman{enumi})}}
\item\label{cas:S-(S+T)}
$\varSigma \setminus S \neq \emptyset$ and there is some $v_0 \in \varSigma \setminus (S \cup T)$, 
\item\label{cas:S&T}
$\varSigma \setminus S \neq \emptyset$ and there is some $v_0 \in \varSigma \cap T$, 
\item\label{cas:T}
$\varSigma \setminus S =\emptyset$ and there is some $v_0 \in T$. 
\end{enumerate}
If \ref{cas:S-(S+T)}, 
$(L'')^N/LK''$ is unramified over $v_0$, 
but $LK''/L$ is totally ramified at primes lying over $v_0$ by \ref{asm:totram}. 
If \ref{cas:S&T} or \ref{cas:T}, 
$(L'')^N/LK''$ is decomposed at any primes lying over $v_0$, 
but no primes lying over $v_0$ split in $LK''/L$ by \ref{asm:nonsp}. 
In either case, 
$(L'')^N/L$ is never cyclic, 
and hence $\mathrm{Gal}((L'')^N/L)$ is an abelian group of type $[p^2,p]$. 
There exists uniquely an intermediate field $M$ of $(L'')^N/L$ such that $\mathrm{Gal}(M/L)$ is an abelian group of type $[p,p]$. 
Then $L' \subset M$ and $[M:L']=p$. 
Thus we obtain the following diagram. 
\[
\entrymodifiers={+!!<0pt,\fontdimen22\textfont2>}
\xymatrix{
K'' \ar@{--}[r] & LK'' \ar@{-}[r] & (L'')^N \ar@{--}[r] & L'' \\
K' \ar@{-}[u] \ar@{--}[r] & L' \ar@{-}[u] \ar@{-}[ru]|{\hbox{\,$\cdot$\,}} \ar@{-}[r] & M \ar@{-}[u] & \\
K \ar@{-}[u] \ar@{--}[r] & L \ar@{-}[u] \ar@{-}[ru]|{\hbox{\,$\cdot$\,}} \ar@{-}[r] & \cdot \ar@{-}[u] & \\
} 
\]

Let $v$ be an arbitrary prime of $L$ lying over a prime in $T \setminus \varSigma$. 
Then $v$ is inert in $LK''/L$ by \ref{asm:nonsp}, 
and $(L'')^N/L$ is unramified at $v$. 
Since $(L'')^N/L$ is not cyclic, $(L'')^N/LK''$ is decomposed at any primes lying over $v$. 
Hence $(L'')^N \subset L_{S,e}^T(K'')$ (even if $\varSigma \setminus S \neq \emptyset$). 

Let $w_0$ be an arbitrary prime in $(\varSigma \setminus S) \cup T$. 
There exists such a prime $w_0$ by \ref{asm:S-S+T}. 
For each prime $w$ of $(L'')^N$ lying over $w_0$, 
we denote by $L_w$ either the inertia field or the decomposition field in $(L'')^N/K$ according to whether $w_0 \in \varSigma \setminus (S \cup T)$ or $w_0 \in T$. 
Then $L \subset L_w$. 
Since $(L'')^N \subset L_{S,e}^T(K'')$, 
$(L'')^N/LK''$ is unramified at any primes lying over $w_0$, 
and moreover decomposed at the prime if $w_0 \in T$. 
Therefore $[(L'')^N:L_w] \le p^2$. 
Since $\mathrm{Gal}((L'')^N/L_w)|_{LK''}=\mathrm{Gal}(LK''/L)$ by \ref{asm:totram} and \ref{asm:nonsp}, 
we have $\mathrm{Gal}((L'')^N/L_w) \simeq \mathbb Z/p^2\mathbb Z$. 
In an abelian group $\mathrm{Gal}((L'')^N/L)$ of type $[p^2,p]$, 
all subgroups of order $p^2$ contains a common subgroup $\mathrm{Gal}((L'')^N/M) \simeq \mathbb Z/p\mathbb Z$, 
and $\mathrm{Gal}((L'')^N/L')$ is the unique noncyclic subgroup of order $p^2$. 
Then $L_w \subset M$ and $L_w \not\subset L'$. Hence $K'L_w=L'L_w=M$. 
This implies that $M$ is the common inertia field or the common decomposition field in $(L'')^N/K'$ for all $w|w_0$ 
according to whether $w_0 \in \varSigma \setminus (S \cup T)$ or $w_0 \in T$. 
In particular, $M \subset L_{S,1}^T(L')$. 
For any $\sigma \in \mathrm{Gal}((L'')^N/K')$, $\sigma(w)$ is also lying over $w_0$. 
Since $\sigma(M)$ is the inertia field or the decomposition field of $\sigma(w)$ in $(L'')^N/K'$ according to whether $w_0 \in \varSigma \setminus (S \cup T)$ or $w_0 \in T$, we have $\sigma(M)=M$. 
Hence $M/K'$ is a Galois extension. 
Since $M \subset L_{S,1}^T(L')$, $M/K'$ is unramified outside $S$ and totally decomposed over $T$. 
Since $(L'')^N=MK''$ and $M \cap K''=K'$, 
we have $\mathrm{Gal}(M/K') \simeq \mathrm{Gal}((L'')^N/K'')$. 
Since $\mathrm{Gal}((L'')^N/K'')$ is an abelian pro-$p$ group whose exponent is at most $p^e$, $M \subset L_{S,e}^T(K')=L'$. 
This is a contradiction. 
Therefore $H=\{1\}$, i.e., $LK''=L_{S,e}^T(K'')=L''$. 
Thus we obtain the statements. 

\begin{remark}
The key points of this proof are the topology of pro-$p$ group and the structure of an abelian group of type $[p^2,p]$. 
The use of equivalence between ``$[G,G]=[[G,G],G]$'' and ``$[G,G]=\{1\}$'' corresponds 
to the use of Nakayama's lemma in the original proof of Theorem \ref{thm:Fukuda} and Theorem \ref{thm:LOXZ}. 
\end{remark}

\section{Examples}

There are various examples of Theorem \ref{thm:main} as follows. 

\begin{example}
Suppose $p=3$, and put $\ell=109 \equiv 1 \pmod{p^3}$. 
Put $K''=L_{\{\ell\},\infty}^{\emptyset}(\mathbb Q)$, 
which is a cyclic extension of $\mathbb Q$ of degree $p^3$. 
Suppose that $K'=L_{\{\ell\},2}^{\emptyset}(\mathbb Q)$ and $K=L_{\{\ell\},1}^{\emptyset}(\mathbb Q)$. 
Then $\varSigma =\{v_{\ell}\}$ where $v_{\ell}$ is the unique prime of $K$ lying over $\ell$. 
Let $S=\{v_7,v_{43},v_{43}',v_{43}''\}$ be the set of primes of $K$ lying over $7$ or $43$. 
Put $T=\emptyset$. 
By \cite{PARI}, we see that $A_{S,\infty}^{\emptyset}(K) \simeq A_{S,\infty}^{\emptyset}(K') \simeq \texttt{[9,3]}$. 
Hence $A_{S,\infty}^{\emptyset}(K'') \simeq \mathbb Z/9\mathbb Z \oplus \mathbb Z/3\mathbb Z$ by Theorem \ref{thm:main}. 
\end{example}

\begin{example}
Suppose $p=2$, and put $k''=L_{\{41,3\},\infty}^{\emptyset}(\mathbb Q)$, which is a totally real cyclic octic field. 
Put $k'=L_{\{41\},\infty}^{\emptyset}(\mathbb Q)=L_{\{41,3\},2}^{\emptyset}(\mathbb Q)$ and $k=L_{\{41\},1}^{\emptyset}(\mathbb Q)=\mathbb Q(\sqrt{41})$. 
The prime $3$ is inert in $k'/\mathbb Q$. 
Suppose that $K''=k''K$ and $K'=k'K$ 
where $K=k(\sqrt{7 \cdot 19 \cdot 37})$. 
Then $\varSigma=\{v_3,v_3',v_{41},v_{41}'\}$, 
where $v_3,v_3'$ (resp.\ $v_{41},v_{41}'$) are the distinct primes of $K$ lying over $3$ (resp.\ $41$). 
Suppose $S=\emptyset$, and put $T=\{v_3,v_3'\}$. 
Since $A_{\emptyset,\infty}^T(K) \simeq A(K) \simeq \texttt{[2,2]}$ 
and $A_{\emptyset,\infty}^T(K') \simeq A(K') \simeq \texttt{[2,2]}$ 
by \cite{PARI}, 
we have $A_{\emptyset,\infty}^T(K'') \simeq \mathbb Z/2\mathbb Z \oplus \mathbb Z/2\mathbb Z$ 
by Theorem \ref{thm:main}. 
\end{example}

\begin{example}\label{p2S2Tem}
Suppose $p=2$, and put $K=\mathbb Q(\sqrt{-7})$ whose class number is $1$. 
Put $K^{(\infty)}=K\mathbb Q^{(\infty)}$ and $K^{(n)}=K\mathbb Q^{(n)}$, 
where $\mathbb Q^{(\infty)}$ is the cyclotomic $\mathbb Z_2$-extension of $\mathbb Q$, and $\mathbb Q^{(n)}$ is the subextension of degree $2^n$. 
Suppose that $K''=K^{(2)}$ and $K'=K^{(1)}=K(\sqrt{2})$. 
Then $\varSigma=\{v,v'\}$, where $v$ and $v'$ are the distinct primes lying over $2$. 
Put $S=\{v\}$ and $T=\emptyset$. 
There is an exact sequence 
\[
\entrymodifiers={+!!<0pt,\fontdimen22\textfont2>}
\xymatrix{
\mathcal{O}_{K'}^{\times} \ar[r]^-{\varphi_e} & (\mathcal{O}_{K'}/w^{e+2})^{\times} \ar[r] & Cl_{w^{e+2}}(K') \ar[r] & Cl(K') \ar[r] & 0 
} , 
\]
where $\mathcal{O}_{K'}$ is the ring of algebraic integers in $K'$, 
$w$ is the prime of $K'$ lying over $v$, 
and $Cl_{w^{e+2}}(K')$ is the ray class group of $K'$ modulo $w^{e+2}$. 
Similarly, 
the ray class group of $K$ modulo $v^{e+2}$ is isomorphic to 
$A_{S,e}^{\emptyset}(K) \simeq (\mathbb Z[\frac{1+\sqrt{-7}}{2}]/v^{e+2})^{\times}/\{\pm 1\} \simeq (\mathbb Z/2^{e+2}\mathbb Z)^{\times}/\{\pm 1\}$, 
and hence $A_{S,\infty}^{\emptyset}(K) \simeq \mathbb Z_2$. 
Since $(\mathcal{O}_{K'}/w^{e+2})^{\times} \simeq (\mathbb Z[\sqrt{2}]/\sqrt{2}^{e+2})^{\times}$ and 
$\mathcal{O}_{K'}^{\times}=\mathbb Z[\sqrt{2}]^{\times}$, 
we have 
$\mathrm{Gal}(L_{S,\infty}^{\emptyset}(K')/L_{\emptyset,\infty}^{\emptyset}(K')) \simeq \varprojlim \mathrm{Coker}\,\varphi_e \simeq \mathrm{Gal}(\mathbb Q^{(\infty)}/\mathbb Q(\sqrt{2})) \simeq \mathbb Z_2$. 
Since $Cl(K') \simeq \texttt{[2]}$ and $Cl_{w^5}(K') \simeq \texttt{[4]}$ by \cite{PARI}, 
we see that $A_{S,\infty}^{\emptyset}(K') \simeq \mathbb Z_2$ 
(see e.g.\ \cite[Lemma 4.2]{Kota20}). 
Then $A_{S,\infty}^{\emptyset}(K'') \simeq \mathbb Z_2$ by Theorem \ref{thm:main}. 
By the recursive use of Theorem \ref{thm:main}, 
$A_{S,\infty}^{\emptyset}(K^{(n)}) \simeq \mathbb Z_2$ for all $n \in \mathbb N$. 
\end{example}

Moreover, as an application of Theorem \ref{thm:main}, 
we obtain the following theorem on the Iwasawa modules of $2$-adic Lie iterated extensions. 

\begin{theorem}\label{thm:Z2Z2}
Suppose that $p=2$ and $k=\mathbb Q(\sqrt{-\ell_1\ell_2})$ 
with two distinct prime numbers $\ell_1 \equiv \ell_2 \equiv \pm 5 \pmod{8}$. 
Let $k^{(\infty)}$ be the cyclotomic $\mathbb Z_2$-extension of $k$. 
Suppose that $b_0$ is an algebraic integer in $k$ such that 
$2+b_0 \in \ell_1 (k^{\times})^2 \cup 2\ell_1 (k^{\times})^2$ 
and $4-b_0^2 \in (k^{\times})^2$. 
Let $\{b_{n}\}_{n \in \mathbb N}$ be a sequence satisfying $b_n^2-2=b_{n-1}$ for all $n \in \mathbb N$. 
Put $k_n=k(b_n)$ and $k_{\infty}=\bigcup_{n \in \mathbb N} k_n$. 
Then $k_{\infty}k^{(\infty)}/k$ is a $\mathbb Z_2 \rtimes \mathbb Z_2$-extension unramified outside $2$ 
such that $A_{\emptyset,\infty}^{\{2\}}(k_{\infty}k^{(\infty)}) \simeq \mathbb Z_2$. 
\end{theorem}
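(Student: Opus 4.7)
The plan is to first establish the asserted Galois-structural and ramification properties of $k_\infty k^{(\infty)}/k$, and then to pin down the Iwasawa module $A_{\emptyset,\infty}^{\{2\}}(k_\infty k^{(\infty)})$ by combining a direct computation at a suitable intermediate level with iterative applications of Theorem \ref{thm:main} along cyclic quartic subextensions.

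I would begin by setting $c = \sqrt{4-b_0^2} \in k$ and $y_0 = (b_0+ic)/2 \in k(i)$, so that $y_0 \bar y_0 = 1$ and $y_0 + y_0^{-1} = b_0$. Choosing compatible square roots $y_n$ with $y_n^2 = y_{n-1}$ yields $b_n = y_n + y_n^{-1}$, and the Kummer extension $k(i)(y_\infty)/k(i)$ is a $\mathbb{Z}_2$-extension. This uses Kummer theory together with the hypothesis $2+b_0 \in \ell_1(k^\times)^2 \cup 2\ell_1(k^\times)^2$ and the congruence $\ell_i \equiv \pm 5 \pmod 8$ to control the square classes of the $y_n$ at each level; one then concludes $[k_n:k] = 2^n$. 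The explicit identity $\sqrt{2+d_n} = (b_{n+2}+d_{n+1})/\sqrt 2$, where $d_n = \sqrt{2-b_n}$ and which comes from expanding $(\zeta_8 y_{n+2} + \zeta_8^{-1}y_{n+2}^{-1})^2 = 2+d_n$, places the Galois closure of every $k_n/k$ inside $k_n k^{(1)}$, so that the Galois closure of $k_\infty/k$ lies in $k_\infty k^{(\infty)}$. The exact sequence
\begin{equation*}
1 \to \mathrm{Gal}(k_\infty k^{(\infty)}/k^{(\infty)}) \to \mathrm{Gal}(k_\infty k^{(\infty)}/k) \to \mathrm{Gal}(k^{(\infty)}/k) \to 1
\end{equation*}
has both outer terms isomorphic to $\mathbb{Z}_2$ and splits by pro-$2$ projectivity, yielding $\mathbb{Z}_2 \rtimes \mathbb{Z}_2$. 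Ramification is confined to the unique prime $v_2$ of $k$ above $2$: on the Kummer side, $y_0\bar y_0 = 1$ makes $y_0$ a $\{v_2\}$-unit, and because $\ell_i$ ramifies in $k/\mathbb Q$ the $\mathfrak l_i$-valuation of $\ell_1$ is even in $k$ so no odd-valuation square roots appear off $v_2$; on the cyclotomic side, $k^{(\infty)}/k$ is by construction only ramified at $v_2$.

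For the Iwasawa module, I would first directly compute $A_{\emptyset,\infty}^{\{v_2\}}(k^{(\infty)}) \simeq \mathbb Z_2$ via a class-field-theoretic analysis of the cyclotomic $\mathbb Z_2$-tower of $k$ in the spirit of Example \ref{p2S2Tem}: identify $A_{\emptyset, e}^{\{v_2\}}(k^{(n)})$ as a suitable quotient of the ray class group of $k^{(n)}$ using the explicit structure of units and Frobenius at the prime above $2$, and pass to the double limit in $n$ and $e$, where the congruence $\ell_i \equiv \pm 5 \pmod 8$ guarantees the right local cyclicity. Next, filter $k_\infty k^{(\infty)}/k^{(\infty)}$ by the cyclic $2^m$-layers $L^{(m)}$ of the normal $\mathbb{Z}_2 = \mathrm{Gal}(k_\infty k^{(\infty)}/k^{(\infty)})$ and apply Theorem \ref{thm:main} with $K = L^{(m-1)}$, $K'' = L^{(m+1)}$ cyclic quartic, $S = \emptyset$, $T = \varSigma = $ the set of primes of $K$ above $2$; conditions \ref{asm:S-S+T} and \ref{asm:totram} are immediate ($T$ is non-empty and $\varSigma \setminus (S \cup T) = \emptyset$), and condition \ref{asm:nonsp} follows from the total ramification of $v_2$ in $k_\infty k^{(\infty)}/k^{(\infty)}$. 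The base-case isomorphism $A_{\emptyset,\infty}^{\{v_2\}}(L^{(1)}) \simeq A_{\emptyset,\infty}^{\{v_2\}}(k^{(\infty)})$ is then verified by a ray-class-group comparison at the first Kummer layer, and Theorem \ref{thm:main} propagates it up the whole tower, giving $A_{\emptyset,\infty}^{\{2\}}(k_\infty k^{(\infty)}) \simeq \mathbb Z_2$.

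The central technical difficulty will be the two direct computations — identifying $A_{\emptyset,\infty}^{\{v_2\}}(k^{(\infty)})$ as $\mathbb Z_2$ by Iwasawa-style ray-class-field analysis of the cyclotomic tower, and then establishing the base-case isomorphism at the first Kummer layer $L^{(1)}/k^{(\infty)}$ — both of which hinge on the precise square-class data for $2+b_0$ and the congruences $\ell_1 \equiv \ell_2 \equiv \pm 5 \pmod 8$ controlling the $2$-adic local behavior of units and of the Frobenius at $v_2$. A secondary difficulty is verifying condition \ref{asm:nonsp} at every stage of the non-abelian $\mathbb Z_2 \rtimes \mathbb Z_2$-tower, since the decomposition of $v_2$ in the Kummer direction does not follow trivially from its behavior in the cyclotomic direction and must be tracked using the explicit generators $b_n$.
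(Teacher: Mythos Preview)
Your overall skeleton matches the paper's: establish the $\mathbb Z_2 \rtimes \mathbb Z_2$ structure and ramification, compute the module at the bottom two layers over $k^{(\infty)}$, then apply Theorem \ref{thm:main} recursively. But two of your load-bearing claims are wrong or under-specified, and they are exactly where the paper invokes outside input.

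First, your assertion that $v_2$ is totally ramified in $k_\infty k^{(\infty)}/k^{(\infty)}$ is false. From $2+b_0 \in \ell_1(k^\times)^2 \cup 2\ell_1(k^\times)^2$ one gets $k_1 k^{(1)} = k^{(1)}(\sqrt{\ell_1})$, so $k_1 k^{(\infty)}/k^{(\infty)}$ is an \emph{unramified} quadratic extension in which the prime over $2$ is inert; only from $k_1 k^{(\infty)}$ upward is the extension totally ramified at $2$. The paper proves the latter by observing that $L_{\emptyset,\infty}^{\emptyset}(k^{(\infty)})$ contains no cyclic quartic extension of $k^{(\infty)}$ in which $v$ is inert. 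Your verification of condition \ref{asm:nonsp} via ``total ramification'' therefore fails at the very first step $K=k^{(\infty)}$, $K'=k_1k^{(\infty)}$; one needs the inertness argument instead.

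Second, the two ``direct computations'' you flag as technical difficulties are the substance of the proof, and neither is accessible by the route you suggest. The paper obtains $A_{\emptyset,\infty}^{\{2\}}(k^{(\infty)}) \simeq \mathbb Z_2$ by citing Ferrero's theorem on cyclotomic $\mathbb Z_2$-extensions of imaginary quadratic fields \cite{Fer80}, which gives the structure of $\mathrm{Gal}(L_{\emptyset,\infty}^{\emptyset}(k^{(\infty)})/k^{(\infty)})$ and locates $k_1k^{(\infty)}$ inside it. Example \ref{p2S2Tem} is not the right template: it computes $A_{S,\infty}^{\emptyset}$ with $S \neq \emptyset$ (a ray-class-group limit), whereas here you need $A_{\emptyset,\infty}^{T}$ (the Hilbert class field tower modulo Frobenius at $2$), a different object. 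Likewise, the step up to $A_{\emptyset,\infty}^{\{2\}}(k_1k^{(\infty)}) \simeq \mathbb Z_2$ uses \cite[Theorem 2.1]{MizB} on the maximal unramified pro-$2$-extension of $k^{(\infty)}$ to control decomposition of $2$; a bare ``ray-class-group comparison at the first Kummer layer'' will not produce this. Without these two external results (or equivalent substitutes), you cannot start the induction.
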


\begin{proof}
We denote by $k^{(m)}$ the $m$th layer of the $\mathbb Z_2$-extension $k^{(\infty)}/k$. 
Note that $k^{(1)}=k(\sqrt{2})$ and $k^{(\infty)}/\mathbb Q$ is totally ramified at $2$. 
Put $L=L_{\emptyset,\infty}^{\emptyset}(k^{(\infty)})$ and $L'=L_{\emptyset,\infty}^{\{2\}}(k^{(\infty)})$. 
By the assumption, 
$k_1k^{(1)}=k^{(1)}(\sqrt{2+b_0})=k^{(1)}(\sqrt{\ell_1})=k^{(1)}(\sqrt{-\ell_2})$. 
Then $k_1k^{(\infty)}/k^{(\infty)}$ is an unramified quadratic extension, 
and $k_1k^{(\infty)} \cap L'=k^{(\infty)}$. 
By the assumption and \cite[Theorem 5]{Fer80}, 
$L'/k^{(\infty)}$ is a $\mathbb Z_2$-extension, and $L=k_1L'$. 
In particular, $A_{\emptyset,\infty}^{\{2\}}(k^{(\infty)}) \simeq \mathbb Z_2$. 
Then $L/k_1k^{(\infty)}$ is a $\mathbb Z_2$-extension, 
and $L' \subset L \subset L_{\emptyset,\infty}^{\{2\}}(k_1k^{(\infty)}) \subset \widetilde{L}$, 
where $\widetilde{L}$ denotes the maximal unramified pro-$2$-extension of $k^{(\infty)}$. 
By \cite[Theorem 2.1]{MizB}, $\widetilde{L}/L'$ is a finite cyclic $2$-extension. 
Since a prime lying over $2$ is inert in the quadratic subextension $L/L'$ of $\widetilde{L}/L'$, 
the prime is inert in $\widetilde{L}/L'$. 
Therefore $L = L_{\emptyset,\infty}^{\{2\}}(k_1k^{(\infty)})$, 
and hence $A_{\emptyset,\infty}^{\{2\}}(k_1k^{(\infty)}) \simeq \mathbb Z_2$. 

Put $k_0=k(b_0)=k$. 
By the assumption on $b_0$ and \cite[Theorem 3.1]{Kota20}, 
$k_{n+2}/k_n$ is a cyclic quartic extension unramified outside $2$ for each $n \in \{0\} \cup \mathbb N$. 
By \cite[Proposition 4.1]{Kota20}, $k_{\infty}k^{(\infty)}/k$ is a Galois extension unramified outside $2$. 
Since $k_1k^{(\infty)} \neq k^{(\infty)}$, 
one can see that $k_{n+2}k^{(\infty)}/k_nk^{(\infty)}$ is also a cyclic quartic extension for each $n \in \{0\} \cup \mathbb N$, inductively. 
Then $k_{\infty}k^{(\infty)}/k^{(\infty)}$ is a $\mathbb Z_2$-extension by \cite[Lemma 4.2]{Kota20}, 
and $k_{\infty}k^{(\infty)}/k$ is a $\mathbb Z_2 \rtimes \mathbb Z_2$-extension. 
Recall that the prime $v$ of $k^{(\infty)}$ lying over $2$ is inert in $k_1k^{(\infty)}$. 
Since $L$ contains no cyclic quartic extension of $k^{(\infty)}$ in which $v$ is inert, 
$k_2k^{(\infty)}/k^{(\infty)}$ is not unramified, 
and hence $k_{\infty}k^{(\infty)}/k_1k^{(\infty)}$ is totally ramified at the prime lying over $2$. 
By applying Theorem \ref{thm:main} for $K''=k_2k^{(\infty)}$, $K=k^{(\infty)}$, $\varSigma=T=\{v\}$ and $S=\emptyset$, 
we see that $A_{\emptyset,\infty}^{\{2\}}(k_2k^{(\infty)}) \simeq \mathbb Z_2$. 
By the recursive use of Theorem \ref{thm:main} for the quartic extensions $k_{n+2}k^{(\infty)}/k_nk^{(\infty)}$, 
$A_{\emptyset,\infty}^{\{2\}}(k_nk^{(\infty)}) \simeq \mathbb Z_2$ for all $n \in \mathbb N$, and hence $A_{\emptyset,\infty}^{\{2\}}(k_{\infty}k^{(\infty)}) \simeq \mathbb Z_2$. 
\end{proof}

\begin{remark}
In the situation of Theorem \ref{thm:Z2Z2}, 
the inertia field of the prime lying over $2$ in $k_{\infty}k^{(\infty)}/k$ is $k(\sqrt{\ell_1})$, which is either $k_1=k(\sqrt{2+b_0})$ or $k(\sqrt{2(2+b_0)})$. 
Since the prime $2$ does not split in $k_{\infty}k^{(\infty)}/\mathbb Q$, 
$\mathrm{Ker}(A_{\emptyset,\infty}^{\emptyset}(k_{\infty}k^{(\infty)}) \rightarrow A_{\emptyset,\infty}^{\{2\}}(k_{\infty}k^{(\infty)}))$ is procyclic. 
Then $A_{\emptyset,\infty}^{\emptyset}(k_{\infty}k^{(\infty)})$ is a finitely generated $\mathbb Z_2$-module, in particular a ``pseudo-null'' $\mathbb Z_2[[\mathrm{Gal}(k_{\infty}k^{(\infty)}/k)]]$-module 
(cf. \cite{C4MY,Kota20} etc.). 
\end{remark}

\begin{example}
Suppose that $\ell_1=11$ and $\ell_2=19$ in the situation of Theorem \ref{thm:Z2Z2}. 
Then $\widetilde{L}=L$ by \cite[Corollary 3.4]{MizB}, 
and hence 
$A_{\emptyset,\infty}^{\emptyset}(k_1k^{(\infty)}) \simeq \mathbb Z_2$. 

If $b_0=-93102$, then $2+b_0=-{70}^2\ell_2$ and $4-b_0^2=-{6440}^2 \ell_1\ell_2$. 
Hence $A_{\emptyset,\infty}^{\{2\}}(k_{\infty}k^{(\infty)}) \simeq \mathbb Z_2$ by Theorem \ref{thm:Z2Z2}. 
Since $k(\sqrt{\ell_1})=k_1$, $k_2k^{(\infty)}/k_2$ is totally ramified over $2$. 
By \cite{PARI}, we see that $A(k_2) \simeq \texttt{[2]}$ and $A(k_2k^{(1)}) \simeq \texttt{[8]}$. 
By \cite[Theorem 1 (2)]{Fuk94} (or Theorem \ref{thm:LOXZ}), 
$A_{\emptyset,\infty}^{\emptyset}(k_2k^{(\infty)})$ is procyclic. 
By the recursive use of Theorem \ref{thm:main} for $k_{n+2}k^{(\infty)}/k_nk^{(\infty)}$, 
we have $A_{\emptyset,\infty}^{\emptyset}(k_{\infty}k^{(\infty)}) \simeq \mathbb Z_2$. 

If $b_0=6440\sqrt{-\ell_1\ell_2}$, then $2+b_0=2\ell_1^{-1}(506+35\sqrt{-\ell_1\ell_2})^2$ and 
$4-b_0^2={93102}^2$. 
Hence $A_{\emptyset,\infty}^{\{2\}}(k_{\infty}k^{(\infty)}) \simeq \mathbb Z_2$ by Theorem \ref{thm:Z2Z2}. 
Since $k(\sqrt{\ell_1})=k(\sqrt{2(2+b_0)})$, 
$k_2k^{(\infty)}/k_2k^{(1)}$ is totally ramified over $2$. 
By \cite{PARI}, $A(k_2k^{(1)}) \simeq \texttt{[8]}$ and $A(k_2k^{(2)}) \simeq \texttt{[16]}$ under GRH. 
Then $A_{\emptyset,\infty}^{\emptyset}(k_{\infty}k^{(\infty)}) \simeq \mathbb Z_2$ by the same arguments under GRH. 
\end{example}

\vspace*{15pt}
\begin{acknowledgements}
This work was partially supported by JSPS KAKENHI Grant Number JP17K05167. 
\end{acknowledgements}

\begin{reference}

\bibitem{Fer80}  
B. Ferrero, 
The cyclotomic $\mathbb{Z}_{2}$-extension of imaginary quadratic fields, 
Amer.\ J.\ Math.\ \textbf{102} (1980), no.\ 3, 447--459.

\bibitem{Fuk94} 
T. Fukuda, 
Remarks on $\mathbf Z_p$-extensions of number fields, 
Proc.\ Japan Acad.\ Ser.\ A \textbf{70} (1994), 264--266.

\bibitem{Gre76} 
R. Greenberg, 
On the Iwasawa invariants of totally real number fields, 
Amer.\ J.\ Math.\ \textbf{98} (1976), no.\ 1, 263--284. 

\bibitem{Iwa56} 
K. Iwasawa, 
A note on class numbers of algebraic number fields, 
Abh.\ Math.\ Sem.\ Univ.\ Hamburg \textbf{20} (1956), 257--258. 

\bibitem{Iwa59} 
K. Iwasawa, 
On $\Gamma$-extensions of algebraic number fields, 
Bull.\ Amer.\ Math.\ Soc.\ \textbf{65} (1959), 183--226. 

\bibitem{Lem13} 
F. Lemmermeyer, 
The ambiguous class number formula revisited, 
J.\ Ramanujan Math.\ Soc.\ \textbf{28} (2013), no.\ 4, 415--421. 

\bibitem{LOXZ} 
J. Li, Y. Ouyang, Y. Xu and S. Zhang, 
$\ell$-Class groups of fields in Kummer towers, 
preprint, 2020. 
\texttt{arXiv:1905.04966v2}

\bibitem{MizB} 
Y. Mizusawa, 
On the maximal unramified pro-$2$-extension over the cyclotomic $\mathbb Z_2$-extension of an imaginary quadratic field, 
J.\ Th\'eor.\ Nombres Bordeaux \textbf{22} (2010), no.\ 1, 115--138. 

\bibitem{Miz10} 
Y. Mizusawa, 
On unramified Galois $2$-groups over $\mathbb Z_2$-extensions of real quadratic fields, 
Proc.\ Amer.\ Math.\ Soc.\ \textbf{138} (2010), no.\ 9, 3095--3103.

\bibitem{Miz18} 
Y. Mizusawa, 
Tame pro-$2$ Galois groups and the basic $\mathbb Z_2$-extension, 
Trans.\ Amer.\ Math.\ Soc.\ \textbf{370} (2018), no.\ 4, 2423--2461.

\bibitem{C4MY} 
Y. Mizusawa and K. Yamamoto, 
On 2-adic Lie iterated extensions of number fields arising from a Joukowski map, 
preprint, 2019, to appear in Tokyo J.\ Math.

\bibitem{PARI}
The PARI~Group, PARI/GP version \texttt{2.11.2}, Univ.\ Bordeaux, 2019, 
\linebreak\texttt{http://pari.math.u-bordeaux.fr/}.

\bibitem{Was} 
L. C. Washington, 
Introduction to cyclotomic fields, Second edition, 
Graduate Texts in Mathematics \textbf{83}, Springer-Verlag, New York, 1997. 

\bibitem{Kota20} 
K. Yamamoto, 
On iterated extensions of number fields arising from quadratic polynomial maps, 
J.\ Number Theory \textbf{209} (2020), 289--311.

\bibitem{Yok67} 
H. Yokoi, 
On the class number of a relatively cyclic number field, 
Nagoya Math.\ J.\ \textbf{29} (1967), 31--44. 

\end{reference}

\vspace*{10pt}

{\footnotesize 
\noindent
Department of Mathematics, Nagoya Institute of Technology, Gokiso-cho, Showa-ku, Nagoya 466-8555, Japan. 
\ \ \texttt{mizusawa.yasushi@nitech.ac.jp}
\\

\noindent
Division of Mathematics and Mathematical Science, 
Department of Computer Science and Engineering, 
Graduate School of Engineering, 
Nagoya Institute of Technology, Gokiso-cho, Showa-ku, Nagoya 466-8555, Japan. 
\ \ \texttt{k.yamamoto.953@nitech.jp}
\\
}

\end{document}